\documentclass[leqno,12pt]{amsart} 

\usepackage{tikz}

\usepackage{mathrsfs}

\usepackage{amssymb}
\theoremstyle{plain} 
\newtheorem{theorem}{\indent\sc Theorem}[section]
\newtheorem{lemma}[theorem]{\indent\sc Lemma}

\theoremstyle{definition} 

\begin{document}

\title[Disproofs of two conjectures concerning nondeficient numbers]{Disproofs of two conjectures concerning nondeficient numbers} 

\author[J. M.\ Campbell]{John M.\ Campbell} 

\subjclass[2020]{ 
Primary 11A25.
}
%
\keywords{ 
 Nondeficient number, abundant number, perfect number, divisor, primorial.
}
\address{
 Department of Mathematics and Statistics \endgraf
 Dalhousie University \endgraf
 Halifax, Nova Scotia, Canada
}
\email{jh241966@dal.ca}

\maketitle

\begin{abstract}
A positive integer $n$ is said to be \emph{nondeficient} if $\sigma(n) \geq 2n$. Letting the positive divisors of a positive integer 
 $n$ be written as $1 = d_0 < d_1 < \cdots < d_k < d_{k+1} = n$, and letting $\mathcal{S}$ denote a set of integers, if there exist values 
 $\lambda_j \in \mathcal{S}$ such that $1 + \sum_{j=1}^{k} \lambda_j d_j = n$, then $n$ is said to be an \emph{$\mathcal{S}$-perfect 
 number}. Ross, in 2024, introduced the study of $\mathcal{S}$-perfect numbers, and concluded with two conjectures that each concern 
 both $\{ -1, 1 \}$-perfect numbers and nondeficient numbers. We disprove both of these conjectures.
\end{abstract}

\section{Introduction}
 Let $\sigma_x(n) = \sum_{d \mid n} d^{x}$, and write $\sigma_1(n) = \sigma(n)$. A \emph{perfect number} is a positive integer $n$ that 
 satisfies $\sigma(n) = 2n$. Similarly, a \emph{nondeficient number} $n$ is a positive integer that satisfies $\sigma(n) \geq 2n$. The study 
 of perfect numbers and of variants and 
 generalizations of perfect numbers provides an active area within number theory. The purpose of this paper is to disprove conjectures 
 that were introduced by Ross in 2024 \cite{Ross2024} and that concern nondeficient numbers and a variant of perfect numbers. 

 There is a long history about the study of perfect numbers, again with reference to the work of Ross \cite{Ross2024} along with works 
 cited therein, as in Hassler's paper on the early history surrounding perfect numbers \cite{Hassler2023}. To this day, it is not even known 
 whether or not there are infinitely many perfect numbers, and, indeed, this provides a well-known open problem in number 
 theory. Similarly, it is currently not known whether or not there are any odd perfect numbers, and, again, this provides a well-known 
 open problem in number theory. The foregoing considerations motivate the development of techniques in the study of perfect numbers 
 and variants/generalizations of perfect numbers. 

 For a positive integer $n$, let the positive divisors of $n$ be written so that 
\begin{equation}\label{orderdiv}
 1 = d_0 < d_1 < \cdots < d_k < d_{k+1} = n. 
\end{equation}
 Let $\mathcal{S}$ be a set of integers. According to Ross \cite{Ross2024}, the integer $n$ is said to be \emph{$\mathcal{S}$-perfect} if 
 there are integers $\lambda_1$, $\lambda_2$, $\ldots$, $\lambda_k \in \mathcal{S}$ satisfying $1 + \sum_{j=1}^{k} \lambda_j d_j = n$. 
 Ross introduced and proved a characterization for all even $\{ -1, 1 \}$-perfect numbers, and concluded with two conjectures that 
 both concern both $\{ -1, 1 \}$-perfect numbers and nondeficient numbers. Building on our extensive interactions with GPT-5.5 Pro, 
 we disprove both of Ross's conjectures. 

\subsection{Ross's conjectures}\label{introconjectures}
 A positive integer $n$ is said to be \emph{abundant} if $\sigma(n) > 2 n$. Some authors, including Ross~\cite{Ross2024}, use this same 
 term in reference to numbers $n$ satisfying $\sigma(n) \geq 2n$, as opposed to $\sigma(n) > 2n$. For clarity, we make use of the 
 standard definitions for abundant, nondeficient, and perfect numbers. We also restrict the notion of a number being $\{ -1, 1 \}$-perfect 
 to positive integers, as suggested above (and this agrees with the below conjectures from Ross). 

 As observed by Ross, every $\{ -1, 1 \}$-perfect number is nondeficient, but it is not the case that every nondeficient number is $\{ -1, 
 1 \}$-perfect \cite{Ross2024}. This led Ross to conjecture that the $\{ -1, 1 \}$-perfect numbers have a density equal to the density of the 
 nondeficient numbers (cf.\ \cite[Conjecture 16]{Ross2024}). 
 We disprove this conjecture in Section \ref{firstdisproof}. 
 This is motivated by the problem of evaluating the density of abundant numbers, 
 (reviewed in Section \ref{secBack} below), and this 
 is also motivated by the recent work of Zelinsky on nondeficient numbers \cite{Zelinsky2026}. 

 Ross also observed that if an integer $n$ is odd and nondeficient, then $n^2$ is also odd and nondeficient, but $n^2$ cannot be $\{ -1, 
 1 \}$-perfect \cite{Ross2024}. This led Ross to conjecture (see Conjecture 17) that every nonsquare odd nondeficient 
 number is $\{ -1, 1 \}$-perfect. 
 We disprove this conjecture in Section \ref{seconddisproof}. 

\subsection{Background}\label{secBack}
 Since the first Ross conjecture given above in Section \ref{introconjectures} concerns the natural density of nondeficient numbers, and 
 since there has been much in the way of research related to the natural density of abundant numbers, we emphasize that these densities 
 are equal. This follows from the density of perfect numbers being $0$, 
 which is well-known, and which can be shown to follow from 
 a result due to Davenport \cite{Davenport1933} giving that the limit 
\begin{equation}\label{defineDu}
 D(u)=\lim_{x\to\infty}\frac1x
 \#\{n\le x:n/\sigma(n)\le u\}
\end{equation}
 exists and that the function of $u \in [0, 1]$ defined in \eqref{defineDu} is continuous. A simplified proof of this result was  subsequently 
 given by  Erd\H{o}s \cite{Erdos1934}. 

 There have been many research efforts concerning the problem of estimating 
 the density $D\big( \frac{1}{2} \big)$ of abundant numbers. 
 In this direction, a notable estimate is given by the bounds 
\begin{equation}\label{Behrendbound}
 0.241 < D\left( \frac{1}{2} \right) < 0.314 
\end{equation}
 given by Behrend in 1933 \cite{Behrend1933}, 
 with reference to subsequent refinements of 
 \eqref{Behrendbound} due to Wall 
 et al.\ \cite{Wall1972,Wall1970,WallCrewsJohnson1972}, due 
 to Del\'eglise \cite{Deleglise1998}, and due to Kobayashi \cite{Kobayashi2010}. 
 Since the density of nondeficient numbers is $D\big( \frac{1}{2} \big)$, 
 and again since the first Ross conjecture \cite{Ross2024} 
 reproducced in Section \ref{introconjectures}
 concerns this same density, 
 this motivates the techniques we apply 
 in Section \ref{firstdisproof} to disprove the first Ross conjecture. 

 In addition to Ross's definition of $\mathcal{S}$-perfect numbers \cite{Ross2024},   many further extensions and variants of perfect numbers  
  have been introduced 
 and extensively studied. As indicated above, this forms a major area within number theory. 
 In turn, this motivates our techniques applied in this paper
 concerning the distributions of $\{ -1, 1 \}$-perfect numbers and nondeficient numbers. 
 Well-known extensions or variants of perfect numbers include sociable numbers
 (with the order-1 sociable numbers being perfect numbers), amicable numbers (which are order-2 sociable numbers), 
 multiply perfect numbers, superperfect numbers, and hyperperfect numbers. 
 Further examples of extensions/variants of perfect numbers include 
 pseudoperfect numbers \cite{Sierpinski1965}, 
 prime-perfect numbers \cite{PollackPomerance2012}, 
 $s$-near perfect numbers \cite{PollackShevelev2012}, and 
 within-perfect numbers~\cite{KwanMiller2024}. 

\section{Disproof of Ross's first conjecture}\label{firstdisproof}
 Following Ross \cite{Ross2024}, we write $P(\mathcal{S})$ for the set of positive $\mathcal{S}$-perfect numbers of the first kind, omitting 
 the brackets associated with the notation for a given finite subset
 $\mathcal{S}$ of $\mathbb{Z}$. We then let 
 $ \mathcal{P} = P(-1,1)$. 
 Ross observed that the sequence of nondeficient numbers that are not $\{ -1, 1 \}$-perfect 
 begins with $ 18$, $20$, $36$, $\ldots$. 
 Informally, our strategy to disprove Ross's first conjecture
 may be thought of as being based on extending the first example of a nondeficient number that is
 not $\{ -1, 1 \}$-perfect. 

 For a given set $\mathcal{S}$ of integers, we adopt the notational conventions such that
\begin{equation}\label{notationlimsup} 
 \overline d(\mathcal S)=\limsup_{x\to\infty}
 \frac{\#\{n\le x:n\in\mathcal S\}}{x},
 \qquad
 \underline d(\mathcal S)=\liminf_{x\to\infty}
 \frac{\#\{n\le x:n\in\mathcal S\}}{x}.
\end{equation} 
 If the two values in \eqref{notationlimsup}
 are equal, then we may write $d(\mathcal S)$ in place of their common value. 
 
 We write 
\begin{equation}\label{definecalA}
 \mathcal{A} =\{n\ge 1:\sigma(n)\ge 2n\}. 
\end{equation}
 The main goal of this section is to prove Theorem \ref{thm:main} below, 
 giving us that 
 there is a set $ \mathcal{E} \subseteq \mathcal{A} \setminus \mathcal{P} $ satisfying $\underline d(\mathcal{E})>0$, i.e., so that 
 $ \overline d(\mathcal{P})<d( \mathcal{A} ) $ holds, thereby 
 disproving Ross's density conjecture \cite[Conjecture 16]{Ross2024} concerning $\mathcal{P}$. 

 We begin with a reformulation, as below, of the definition of a $\{ -1, 1 \}$-perfect number. 

\begin{lemma}\label{lem:criterion}
 For an integer $n$ exceeding $1$, let the positive divisors of $n$
 be written as in \eqref{orderdiv}. 
 Then $n \in \mathcal{P}$ if and only if
$ \frac{\sigma(n)-2n}{2} $
is a sum of distinct divisors chosen from $d_1, d_2, \ldots,d_k$, allowing for the possibility of an empty sum. 
\end{lemma}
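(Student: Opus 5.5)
The plan is a direct translation between sign choices and subsets. Fix $n>1$ with divisors as in \eqref{orderdiv}. Given any choice $\lambda_1,\ldots,\lambda_k\in\{-1,1\}$, let $T=\{j : \lambda_j=-1\}\subseteq\{1,\ldots,k\}$. Then
\[
 1+\sum_{j=1}^{k}\lambda_j d_j \;=\; 1+\sum_{j=1}^{k} d_j - 2\sum_{j\in T} d_j \;=\; \sigma(n)-n-2\sum_{j\in T} d_j ,
\]
since $\sigma(n)=1+n+\sum_{j=1}^k d_j$. Hence the defining equation $1+\sum_j\lambda_j d_j=n$ holds exactly when
\[
 \sum_{j\in T} d_j=\frac{\sigma(n)-2n}{2}.
\]
This equivalence is the whole proof.

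For the forward direction, suppose $n\in\mathcal{P}$ with witnessing signs $\lambda_j$. Setting $T$ as above, the identity shows that $(\sigma(n)-2n)/2$ equals $\sum_{j\in T}d_j$. This is a sum of distinct divisors among $d_1,\ldots,d_k$, and it is empty exactly when all $\lambda_j=1$.

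For the converse, suppose $(\sigma(n)-2n)/2=\sum_{j\in T}d_j$ for some $T\subseteq\{1,\ldots,k\}$, possibly empty. Put $\lambda_j=-1$ for $j\in T$ and $\lambda_j=1$ otherwise. Reading the identity backwards gives $1+\sum_j\lambda_jd_j=n$, so $n\in\mathcal{P}$. Positivity of $n$ is automatic, consistent with the convention in Section~\ref{introconjectures}.

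Only a few minor points need checking. First, the statement implicitly requires $(\sigma(n)-2n)/2$ to be a nonnegative integer. It is indeed the case that a sum of distinct $d_j$ is a nonnegative integer, so the statement makes sense: if $\sigma(n)$ is odd or $\sigma(n)<2n$, both sides of the equivalence fail. Second, the degenerate case where $n$ is prime gives $k=0$. Then the only available sum is the empty one, so the criterion reads $\sigma(n)=2n$. This is false for primes, and correspondingly $1\neq n$, so both sides agree. No step here is a genuine obstacle; the only care needed is the bookkeeping that $d_0=1$ and $d_{k+1}=n$ are excluded from the sign choices and from the admissible summands.
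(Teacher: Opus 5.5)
Your proposal is correct and is essentially the paper's proof: sign choices $\lambda_j\in\{-1,1\}$ correspond to subsets $T=\{j:\lambda_j=-1\}$, and the identity $1+\sum_{j=1}^k d_j=\sigma(n)-n$ turns the defining equation into $\sum_{j\in T}d_j=\frac{\sigma(n)-2n}{2}$. Your extra checks are sound bookkeeping that the paper leaves implicit: odd $\sigma(n)$ or $\sigma(n)<2n$ makes both sides fail, and for prime $n$ (so $k=0$) both sides are false.
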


\begin{proof}
 Since $ 1+\sum_{j=1}^k d_j = \sigma(n)-n$, by switching the sign of $d_j$ for a given index $j \in \{ 1, 2, \ldots, k \}$, we see that: The 
 existence of an expansion of the form $ n=1+\sum_{j=1}^k c_jd_j 
$ for $c_j\in\{-1,1\}$
 is equivalent to the existence of a (possibly empty) subset $T\subseteq\{1,\ldots,k\}$ such that 
 $ \sigma(n) - 2n = 2 \sum_{j \in T} d_j$, as desired. 
\end{proof}

 Observe that Lemma \ref{lem:criterion} confirms that each positive $\{-1,1\}$-perfect number is nondeficient (as observed by Ross 
 \cite{Ross2024}), since a (possibly empty) sum of divisors
 of the form given in Lemma \ref{lem:criterion}
 gives us that $\sigma(n) - 2n \geq 0$. 

 We make use of standard notation for arithmetic functions, letting $\omega(n)$ denote the number of distinct prime factors of a 
 positive integer $n$, 
 letting $\varphi(n)$ denote the Euler totient function giving the number
 of integers $m \in [1, n]$ relatively prime with $n$, 
 letting $\pi(x)$ denote the number of primes not exceeding $x$, 
 and letting 
 $p_n\# = \prod_{i=1}^{n} p_{i}$ denote the $n^{\text{th}}$ primorial. 
 For an integer $n$ exceeding $1$, by writing 
 $n = \prod_{i=1}^{\omega(n)} p_{a_i}^{b_i}$, 
 the relation 
\begin{equation}\label{basicsigma} 
 \sigma_x(n) = \prod_{i=1}^{\omega(n)} \left( 1 + p_{a_{i}}^{x} + p_{a_{i}}^{2x} + \cdots + p_{a_{i}}^{b_i x} \right) 
\end{equation}
 provides a basic property 
 of the arithmetic function $\sigma_x(n)$ \cite[\S16.7]{HardyWright2008} and is required for our purposes. 

 We also require a standard consequence of Markov's inequality, 
 providing what may be seen as a ``non-probabilistic'' version of Markov's inequality. 
 Explicitly, if $E$ is a finite set, then, for a function $F\colon E \to [0, \infty)$ and a positive value $\eta$, 
 we have that 
\begin{equation}\label{Markovinequality} 
 \eta \, \left| \{ u \in E : F(u) \geq \eta \} \right| \leq \sum_{u \in E} F(u). 
\end{equation}

\begin{lemma}\label{lem:small-ratio}
 For every $\varepsilon>0$, the set of positive integers $m$ such that $ (m,6)=1$ and $ \frac{\sigma(m)}{m}<1+\varepsilon$ 
has positive lower asymptotic density.
\end{lemma}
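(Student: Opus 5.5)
The plan is to restrict $m$ to integers whose prime factors all exceed a large parameter $y$, and then use the Markov-type inequality \eqref{Markovinequality} to show that most such $m$ have $\sigma(m)/m$ close to $1$. Fix $\varepsilon>0$ and choose $y\ge 5$ (to be determined). Let $Q=\prod_{p\le y}p$ and consider the set $\mathcal{M}_y$ of positive integers $m$ with $(m,Q)=1$. This set is a union of residue classes modulo $Q$. Its natural density is therefore $\varphi(Q)/Q=\prod_{p\le y}(1-1/p)>0$, and every $m\in\mathcal{M}_y$ satisfies $(m,6)=1$.

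Next, using \eqref{basicsigma}, for $m\in\mathcal{M}_y$ I will write
\[
\frac{\sigma(m)}{m}=\sum_{d\mid m}\frac1d = 1+\sum_{\substack{d\mid m\\ d>1}}\frac1d .
\]
Define $F(m)=\sum_{d\mid m,\,d>1}1/d$ on $E=\mathcal{M}_y\cap[1,x]$. Summing and swapping the order of summation gives
\[
\sum_{m\in E}F(m)\le \sum_{\substack{1<d\le x\\ (d,Q)=1}}\frac1d\cdot\#\{m\le x: d\mid m,\ (m/d,Q)=1\}.
\]
The inner count is at most $\frac{x}{d}\frac{\varphi(Q)}{Q}+Q$. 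The main term is then bounded by
\[
x\frac{\varphi(Q)}{Q}\sum_{\substack{d>1\\ p\mid d\Rightarrow p>y}}\frac1{d^2}
= x\frac{\varphi(Q)}{Q}\Big(\prod_{p>y}(1-p^{-2})^{-1}-1\Big).
\]
The last factor, call it $\delta(y)$, tends to $0$ as $y\to\infty$, since $\sum_{p>y}p^{-2}\to 0$. The error term contributes at most $Q\sum_{d\le x}1/d^2$, which is $O_y(1)$ and hence negligible compared with $x$.

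Applying \eqref{Markovinequality} with $\eta=\varepsilon$ then gives
\[
\#\{m\in E:F(m)\ge\varepsilon\}\le \varepsilon^{-1}x\frac{\varphi(Q)}{Q}\delta(y)+O_y(1).
\]
Since $\#E=x\varphi(Q)/Q+O(Q)$, the number of $m\le x$ with $(m,6)=1$ and $\sigma(m)/m<1+\varepsilon$ is at least
\[
x\frac{\varphi(Q)}{Q}\bigl(1-\delta(y)/\varepsilon\bigr)-O_y(1).
\]
Choosing $y$ so large that $\delta(y)<\varepsilon/2$ shows the lower density is at least $\tfrac12\prod_{p\le y}(1-1/p)>0$.

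The main point needing care is the uniform counting of multiples of $d$ coprime to $Q$ below $x$. To handle it I would simply bound the count crudely by $\lfloor x/d\rfloor$, which avoids the error term altogether: the main-term bound then becomes $x\,\delta'(y)$ with $\delta'(y)=\sum_{d>1,\,p\mid d\Rightarrow p>y}d^{-2}$. Comparing this against $\#E\sim x\varphi(Q)/Q$ requires $\delta'(y)\ll \varphi(Q)/Q\asymp 1/\log y$. This does hold, because $\delta'(y)\ll\sum_{p>y}p^{-2}\ll 1/(y\log y)$, and $y$ large handles it. Either route closes the argument.
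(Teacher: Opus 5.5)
Your argument is correct and is essentially the paper's proof. You restrict to integers coprime to $\prod_{p\le y}p$, bound the first moment of an ``excess'' function by $\rho_y$ times a tail that tends to $0$ as $y\to\infty$, apply Markov's inequality, and then take $y$ large; the only variation is that you average the exact excess $\sum_{d\mid m,\,d>1}1/d$ over all $y$-rough divisors, whereas the paper averages the majorant $\sum_{p\mid m}\frac{1}{p-1}$ of $\log\frac{\sigma(m)}{m}$ over prime divisors.

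One small slip: the additive error $Q$ in your inner count is multiplied only by the weight $1/d$. So the error term is $Q\sum_{1<d\le x}1/d=O_y(\log x)$, not $Q\sum_{d\le x}1/d^2=O_y(1)$. This is still $o(x)$, so the conclusion is unaffected.
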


\begin{proof}
 Let $y$ be a real number such that $y \geq 3$. We proceed to define
\begin{equation}\label{defineRy}
 \mathcal{R}_{y} = \{ m \geq 1 : (m, p_{\pi(y)}\# ) = 1 \}. 
\end{equation}
 By rewriting the right-hand side of \eqref{defineRy}
 as a union of residue classes modulo $p_{\pi(y)}\#$, 
 we find that it has a natural density equal to 
\begin{equation}\label{rhoy} 
 \rho_{y} = \prod_{p \leq y} \left( 1 - \frac{1}{p} \right) = \frac{\varphi(p_{\pi(y)}\#) }{p_{\pi(y)}\# }, 
\end{equation}
 i.e., so that 
\begin{equation}\label{densityRy}
 \left| \{ m \leq x : m \in \mathcal{R}_{y} \} \right| 
 = \rho_{y} x + O_y(1).
\end{equation}
 Since $y \geq 3$, we have that $2 \mid p_{\pi(y)}\#$ and $3 \mid p_{\pi(y)}\#$. 
 Consequently, the implication 
\begin{equation}\label{primetosix}
 m \in \mathcal{R}_{y} \Longrightarrow (m, 6) = 1 
\end{equation}
 holds. Now, we define
\begin{equation}\label{defineGy}
 G_{y}(m) = \sum_{ \substack{p \mid m \\ p > y} } \frac{1}{p-1}. 
\end{equation}
 If $m \in \mathcal{R}_{y}$, then each prime $p \leq y$ does not divide $m$. 
 So, if $m \in \mathcal{R}_{y}$, then, since each prime divisor of $m$ is strictly greater than $y$, 
 the right-hand side of \eqref{defineGy} may be rewritten so that 
 $G_y(m) = \sum_{p \mid m} \frac{1}{p-1}$. 

 Now, by writing a positive integer $m$
 so that $m = \prod_{p^b \Vert m} p^b$, this together with the relation in \eqref{basicsigma}
 give us that 
\begin{align*}
 \frac{\sigma(m)}{m} 
 & = \prod_{p^a \Vert m} \left( 1 + \frac{1}{p} + \cdots + \frac{1}{p^a} \right) \\ 
 & \leq \prod_{p \mid m} \left( 1 - \frac{1}{p} \right)^{-1}. 
\end{align*}
 If $u \in (0, 1)$, then
 $-\log(1-u) \leq \frac{u}{1-u}$. 
 This together with the relation 
 $\log \frac{\sigma(m)}{m} \leq \sum_{p \mid m} - \log\left( 1 - \frac{1}{p} \right)$
 give us that 
\begin{equation}\label{convertGy} 
 \log \frac{\sigma(m)}{m} \leq \sum_{p \mid m} \frac{1}{p-1}. 
\end{equation}
 So, from \eqref{convertGy} along with how the right-hand side of 
 \eqref{defineGy} reduces for $m \in \mathcal{R}_{y}$, we find that 
\begin{equation}\label{ifminRy} 
 m \in \mathcal{R}_{y} \Longrightarrow \log \frac{\sigma(m)}{m} \leq G_y(m).
\end{equation}

 By defining 
\begin{equation}\label{defineSy}
 S_{y}(x) = \sum_{\substack{m \leq x \\ m \in \mathcal{R}_{y} }} G_{y}(m), 
\end{equation}
 we proceed to rewrite the right-hand side of \eqref{defineSy} so that 
\begin{align}
\begin{split}
 S_{y}(x) 
 & = \sum_{\substack{m \leq x \\ m \in \mathcal{R}_{y} }} \sum_{ \substack{p \mid m \\ p > y} } \frac{1}{p-1} \\ 
 & = \sum_{p \in (y, x]} \frac{1}{p-1} \left| \{ m \leq x : m \in \mathcal{R}_{y}, p \mid m \} \right|.
\end{split}\label{splitdoublesum}
\end{align}
 We proceed to fix a prime $p > y$. 
 From the inequality $p > y$, we have that $p \nmid p_{\pi(y)}\#$. 
 Now, let $m$ be an integer multiple of $p$, writing $m = p u$. 
 With this assumption, we obtain the equivalences
\begin{equation}\label{equivalences} 
 m \in \mathcal{R}_{y} \Longleftrightarrow (pu, p_{\pi(y)}\#) = 1 \Longleftrightarrow (u, p_{\pi(y)}\#) = 1. 
\end{equation}
 In turn, the equivalences in \eqref{equivalences} give us that 
\begin{equation}\label{equalcard} 
 \left| \{ m \leq x : m \in \mathcal{R}_{y}, p \mid m \} \right| 
 = \left| \left\{ u \leq \frac{x}{p} : (u, p_{\pi(y)}\#) = 1 \right\} \right|. 
\end{equation}

 For real $X \geq 0$, we define 
\begin{equation}\label{defineNy} 
 \mathcal{N}_{y}(X) = \left| \{ u \leq X : (u, p_{\pi(y)}\#) = 1 \} \right|. 
\end{equation} 
 Now, write 
\begin{equation}\label{floorX}
 \lfloor X \rfloor = a \, p_{\pi(y)}\# + b
\end{equation}
 for $a \geq 0$ and for $b$ such that 
\begin{equation}\label{boundb}
 0 \leq b < p_{\pi(y)}\#, 
\end{equation}
 and define
\begin{equation}\label{mathcalSy}
 \mathcal{T}_{y}(b) = \left| \{ u \in [1, b] : (u, p_{\pi(y)}\#) = 1 \} \right|. 
\end{equation}
 For a positive integer $z$, the indicator function determining whether or not a positive integer is coprime with $z$ is periodic, with 
 period $z$. For the case of $z = p_{\pi(y)}\#$, each occurrence of the period contains $\varphi(p_{\pi(y)}\#)$ integers coprime 
 to $p_{\pi(y)}\#$. So, by exploiting this periodicity phenomenon, 
 and by counting integers $u \leq X$ coprime with $p_{\pi(y)}\#$, 
 we may rewrite the right-hand side of 
 \eqref{defineNy} so that 
\begin{equation}\label{Nyphi}
 \mathcal{N}_{y}(X) = a \, \varphi(p_{\pi(y)}\#) + \mathcal{T}_{y}(b). 
\end{equation}
 From \eqref{rhoy} and \eqref{Nyphi} together, we find that 
\begin{equation}\label{Nyrho}
 \mathcal{N}_{y}(X) = \rho_{y} a \, p_{\pi(y)}\# + \mathcal{T}_{y}(b). 
\end{equation}
 In a similar spirit, the relations in \eqref{floorX} and \eqref{Nyrho} together give us that 
\begin{equation}\label{needsbounds} 
 \mathcal{N}_{y}(X) - \rho_{y} X = \mathcal{T}_{y}(b) - \rho_{y} b + \rho_{y} \big( \lfloor X \rfloor - X \big). 
\end{equation}
 From the bounds for $b$ in \eqref{boundb} together with 
 the right-hand side of \eqref{mathcalSy}, we see that 
\begin{equation}\label{boundSy} 
 0 \leq \mathcal{T}_{y}(b) < p_{\pi(y)}\#. 
\end{equation}
 Recalling the density $\rho_y$ shown in 
 \eqref{rhoy}, we find that 
 $0 \leq \rho_y b \leq b$, so that the bounds for $b$ in \eqref{boundb} give us that 
\begin{equation}\label{boundrhoy}
 0\leq \rho_y b < p_{\pi(y)}\#.
\end{equation}
 So, from \eqref{needsbounds} together with the bounds in both 
 \eqref{boundSy} and \eqref{boundrhoy}, we find that 
\begin{equation}\label{reducetoO1}
 \mathcal{N}_{y}(X) = \rho_y X + O(p_{\pi(y)}\#). 
\end{equation}
 With the understanding that $y$ is fixed, the relation in \eqref{reducetoO1} is equivalent to 
\begin{equation}\label{rhoyXerror}
 \left| \{ u \leq X : (u, p_{\pi(y)}\#) = 1 \} \right| = \rho_y X + O_{y}(1). 
\end{equation}
 By setting $X = \frac{x}{p}$, the relations in \eqref{equalcard} 
 and \eqref{rhoyXerror} together yield 
\begin{equation}\label{placeinsummand}
 \left| \{ m \leq x : m \in \mathcal{R}_{y}, p \mid m \} \right| 
 = \rho_y \frac{x}{p} + O_{y}(1). 
\end{equation}
 From \eqref{splitdoublesum} and \eqref{placeinsummand} together, we find that 
\begin{equation}\label{rewriteloglog}
 S_{y}(x) = \rho_y x \sum_{p \in (y, x]} \frac{1}{ p (p-1)} + 
 \left( \sum_{p \in (y, x]} \frac{1}{p-1} \right) O_{y}(1). 
\end{equation}
 From \eqref{rewriteloglog}, we write 
\begin{equation}\label{useconvergence}
 \frac{S_{y}(x)}{x} 
 = \rho_y \sum_{p \in (y, x]} \frac{1}{ p (p-1)} + 
 O_{y}\left( \frac{\log \log x}{x} \right). 
\end{equation}
 The convergence of $\sum_{p} \frac{1}{p(p-1)}$ together with the relation in 
 \eqref{useconvergence} give us that 
\begin{equation}\label{limfracSy} 
 \lim_{x \to \infty} \frac{S_{y}(x)}{x} = \rho_y \sum_{p > y} \frac{1}{p(p-1)}. 
\end{equation}
 Observe that $ \sum_{p > y} \frac{1}{p(p-1)} \to 0$ as $y \to \infty$. 
 So, for arbitrary $\varepsilon > 0$, we may choose 
 $y = y_{\varepsilon} \geq 3$ so that $ \sum_{p > y} \frac{1}{p(p-1)} < \log(1 + \varepsilon)$. 

 Now, define 
\begin{equation}\label{defineByL}
 \mathcal{B}_{y, \varepsilon} = \{ m \in \mathcal{R}_{y} : G_{y}(m) \geq \log(1 + \varepsilon) \}. 
\end{equation}
 An application of the formulation of Markov's inequality in \eqref{Markovinequality} then gives us that 
\begin{align}
\begin{split}
 \log(1 + \varepsilon) \, \left| \{ m \leq x : m \in \mathcal{B}_{y, \varepsilon} \} \right|
 & \leq \sum_{\substack{ m \leq x \\ m \in \mathcal{R}_{y}}} G_{y}(m) \\ 
 & = S_{y}(x), 
\end{split}\label{splitMarkov}
\end{align}
 recalling \eqref{defineSy}. 
 From \eqref{limfracSy} along with \eqref{splitMarkov}, we find that 
 $$ \overline{d}(\mathcal{B}_{y, \varepsilon}) \leq \frac{\rho_y}{\log(1 + \varepsilon)} \sum_{p > y} \frac{1}{p(p-1)}. $$
 Recalling the density of $\mathcal{R}_{y}$ shown in \eqref{densityRy}, we find that 
 $ \underline{d}\big( \mathcal{R}_{y} \setminus \mathcal{B}_{y, \varepsilon} \big) \geq \rho_y - 
 \overline{d}(\mathcal{B}_{y, \varepsilon})$. 
 Consequently, we obtain the relations
\begin{equation}\label{underlinedlower} 
 \underline{d}\big( \mathcal{R}_{y} \setminus \mathcal{B}_{y, \varepsilon} \big)
 \geq \rho_y \left( 1 - \frac{1}{\log(1 + \varepsilon)} \sum_{p > y} \frac{1}{p(p-1)} \right) > 0. 
\end{equation} 

 Now, if $ m \in \mathcal{R}_{y} \setminus \mathcal{B}_{y, \varepsilon}$, 
 then a combined application of the implication in \eqref{ifminRy}
 together with the complement of the right-hand side of \eqref{defineByL}
 gives us that 
 $\log \frac{\sigma(m)}{m} \leq G_y(m) < \log(1 + \varepsilon)$, 
 i.e., so that 
\begin{equation}\label{imply1plusep} 
 m \in \mathcal{R}_{y} \setminus \mathcal{B}_{y, \varepsilon} 
 \Longrightarrow \frac{\sigma(m)}{m} < 1 + \varepsilon. 
\end{equation}

 So, a combined application of the lower bound in \eqref{underlinedlower} together with 
 the implications in both \eqref{primetosix} 
 \eqref{imply1plusep}
 give us that $\mathcal{R}_{y} \setminus \mathcal{B}_{y, \varepsilon}$
 is a set of positive integers $m$ that is of positive lower density
 such that both $
 (m,6)=1$ and $ \frac{\sigma(m)}{m}<1+\varepsilon$ hold. 
 \end{proof}

\begin{lemma}\label{lem:18m}
Let $m$ be a positive integer with $(m,6)=1$, and put
\begin{equation}\label{definesm}
 s(m)=\sigma(m)-m.
\end{equation}
 If
\begin{equation}\label{1over78}
 \frac{s(m)}{m}<\frac1{78},
\end{equation}
 then $18m$ is (strictly) abundant, but $18m\notin \mathcal{P}$.
\end{lemma}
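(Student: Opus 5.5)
The plan is a direct computation of $\sigma(18m)$ followed by an application of Lemma \ref{lem:criterion}. The key idea is to split the divisors of $18m$ into ``large'' divisors, which are multiples of $m$, and ``small'' divisors, whose total is tiny compared with $m$. A size argument then shows that no subset of the divisors can sum to the required target.

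\emph{Step 1 (abundance).} Since $(m,6)=1$, multiplicativity via \eqref{basicsigma} gives $\sigma(18m)=\sigma(18)\sigma(m)=39\sigma(m)=39m+39s(m)$. Hence
$$\sigma(18m)-2\cdot 18m=3m+39s(m)>0,$$
so $18m$ is strictly abundant.

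\emph{Step 2 (target for Lemma \ref{lem:criterion}).} By Lemma \ref{lem:criterion}, $18m\in\mathcal P$ exactly when
$$T=\frac{3m+39s(m)}{2}$$
is an integer and is a sum of distinct divisors of $18m$ other than $1$ and $18m$. If $T$ is not an integer we are done immediately, so assume it is. Hypothesis \eqref{1over78} gives $39s(m)<m/2$. Therefore $\tfrac{3m}{2}\le T<\tfrac{7m}{4}$.

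\emph{Step 3 (splitting the divisors).} Because $(m,6)=1$, every divisor of $18m$ factors uniquely as $de$ with $d\mid 18$ and $e\mid m$. I will call $de$ \emph{large} if $e=m$ and \emph{small} if $e<m$.
\begin{itemize}
\item Every large divisor is an integer multiple of $m$. So any chosen sub-collection of large divisors sums to $km$ for some integer $k\ge 0$.
\item The small divisors have total sum $\sigma(18)\,(\sigma(m)-m)=39s(m)$. So any chosen sub-collection of small divisors sums to some value $r$ with $0\le r\le 39s(m)$.
\end{itemize}
Hence any admissible representation forces $T=km+r$ with $k\in\mathbb Z_{\ge 0}$ and $0\le r\le 39s(m)$.

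\emph{Step 4 (ruling out every $k$).} We check the possible values of $k$ in turn.
\begin{itemize}
\item If $k=0$, then $r=T\ge 3m/2>39s(m)$, which is impossible.
\item If $k=1$, then $r=\tfrac{m}{2}+\tfrac{39s(m)}{2}$. This exceeds $39s(m)$ precisely when $39s(m)<m$, which holds by \eqref{1over78}.
\item If $k\ge 2$, then $r=T-km\le \tfrac{39s(m)-m}{2}<0$.
\end{itemize}
In every case we reach a contradiction, so $18m\notin\mathcal P$.

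The degenerate case $m=1$ (so $s(m)=0$ and $n=18$) is covered by the same argument. The main point needing care is Step 3: the partition into large and small divisors relies on the coprimality $(m,6)=1$, both for multiplicativity and for the unique factorization $de$. It also uses the bound that the small divisors sum to at most $39s(m)$. Excluding the divisors $1$ and $18m$ only helps this bound.
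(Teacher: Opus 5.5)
Your proposal is correct and follows the paper's proof essentially step for step. It uses the same identity $\sigma(18m)-36m=3m+39s(m)$, the same split of divisors into multiples of $m$ plus a remainder of total at most $39s(m)$, and the same size comparison against the target $(3m+39s(m))/2\in[3m/2,\,7m/4)$. Your three-way split $k=0$, $k=1$, $k\ge 2$ and your explicit treatment of a non-integral target are only cosmetic refinements of the paper's two cases $A\le 1$ and $A\ge 2$.
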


\begin{proof}
 We henceforth assume that \eqref{1over78} holds. Writing $n = 18 m$, since $m$ and $6$ are coprime, we find that $(18, m) = 1$. 
 Since $\sigma$ is multiplicative, we see that $\sigma(18m) = 39 \sigma(m)$, so that \eqref{definesm} then gives us that $\sigma(18m) 
 = 39(m + s(m))$. Since $m > 0$ and $s(m) \geq 0$, 
 we find that 
 $\sigma(18m) - 2(18m) = 3 m + 39 s(m) > 0$. 
 So, the value $18m$ is (strictly) abundant, with 
 $18 m \in \mathcal{A}$, recalling 
 the definition in \eqref{definecalA}. 
 So, it remains to show that $18 m \not\in \mathcal{P}$. 

 By way of contradiction, suppose that $18 m \in \mathcal{P}$. So, by Lemma \ref{lem:criterion}, the integer
\begin{equation}\label{defineH} 
 H = \frac{\sigma(18m) - 2(18m)}{2}
\end{equation}
 can be written as a sum of distinct divisors of $18m$ apart from $1$ and $18m$. 
 We then rewrite \eqref{defineH} so that 
$$H = \frac{3m + 39 s(m)}{2}. $$
 With the assumption that \eqref{1over78} holds, 
 we find that $39 s(m) < \frac{m}{2}$. 
 So, again with the assumption of 
 \eqref{1over78} holding, we find that 
\begin{equation}\label{chain} 
 \frac{3m}{2} \leq H = \frac{3m + 39 s(m)}{2} < \frac{3m + \frac{m}{2}}{2} = \frac{7m}{4} < 2m.
\end{equation}
 Now, let $\mathscr{S}$ be a sum of proper, nontrivial divisors of $18 m$. 
 Since $(18, m) = 1$, each divisor of $18m$ is of the form $ed$ for $e \mid 18$ and $d \mid m$. 
 We consider the two cases given by $d = m$ and $d < m$. 
 For the $d = m$ case, 
 any such divisors appearing in the expansion of $\mathscr{S}$ as a sum of divisors of $18 m$ 
 provide a contribution of $A m $ to $\mathscr{S}$, for some integer $A \geq 0$. 
 We then write $\mathscr{S} = A m + B$, where
 $$0 \leq B \leq \sum_{e \mid 18} \sum_{\substack{d \mid m \\ d < m}} e d.$$ 
 Since $\sum_{e \mid 18} e = \sigma(18) = 39$
 and since $\sum_{\substack{d \mid m \\ d < m}} d = \sigma(m) - m = s(m)$, 
 we find that $0 \leq B \leq 39 s(m)$. 
 So, the sum $\mathscr{S}$ is of the form 
 $ \mathscr{S} = A m + B $ for a nonnegative integer $A$ and for an integer
 $B \in [0, 39s(m)]$. 
 If $A \leq 1$, then $\mathscr{S} \leq m + 39 s(m)$, 
 and from \eqref{1over78}, we would then have that 
 $\mathscr{S} < \frac{3m}{2}$, so that $\mathscr{S} < H$, recalling 
 \eqref{chain}. Also, if $2 \leq A$, then
 $2m \leq \mathscr{S}$, but \eqref{chain} gives us that $H < 2m$, 
 so that $H < \mathscr{S}$. 
 So, we have shown that no sum of proper, nontrivial divisors of $18 m$ can be equal to 
 the right-hand side of \eqref{defineH}, contradicting Lemma \ref{lem:criterion}. 
 Since we have shown that $18 m \not\in \mathcal{P}$, 
 we have completed our proof. 
\end{proof}

\begin{theorem}\label{thm:main}
 There exists a set $ \mathcal{E} \subseteq \mathcal{A} \setminus \mathcal{P} $ satisfying $\underline d(\mathcal{E}) > 
 0$. Consequently, the strict inequality $ \overline d(\mathcal{P})<d( \mathcal{A} ) $ holds. 
\end{theorem}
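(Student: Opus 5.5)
The plan is to combine Lemma \ref{lem:small-ratio} with Lemma \ref{lem:18m} and then pass from the set of admissible $m$ to the set of numbers $18m$, tracking how lower density scales.

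\emph{Choosing the parameter.} Apply Lemma \ref{lem:small-ratio} with $\varepsilon = \frac{1}{78}$. This gives a set $\mathcal{M}$ of positive integers $m$ with $(m,6)=1$ and $\frac{\sigma(m)}{m} < 1 + \frac{1}{78}$, satisfying $\underline d(\mathcal{M}) = \delta > 0$. For each $m\in\mathcal{M}$ we have $\frac{s(m)}{m} = \frac{\sigma(m)}{m} - 1 < \frac{1}{78}$. So Lemma \ref{lem:18m} applies and shows that $18m \in \mathcal{A}\setminus\mathcal{P}$.

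\emph{Building the exceptional set.} Define $\mathcal{E} = \{18m : m\in\mathcal{M}\}\subseteq \mathcal{A}\setminus\mathcal{P}$. The map $m\mapsto 18m$ is injective, so
$$\#\{n\le x : n\in\mathcal{E}\} = \#\{m\le x/18 : m\in\mathcal{M}\}.$$
Dividing by $x$ and taking the liminf as $x\to\infty$ (so that $x/18\to\infty$ as well) gives $\underline d(\mathcal{E}) = \frac{1}{18}\underline d(\mathcal{M}) = \frac{\delta}{18} > 0$. This proves the first claim.

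\emph{Comparing densities.} Since $\mathcal{P}\subseteq\mathcal{A}$ (by Lemma \ref{lem:criterion}) and $\mathcal{E}\subseteq\mathcal{A}\setminus\mathcal{P}$ are disjoint, we have $\mathcal{P}\subseteq \mathcal{A}\setminus\mathcal{E}$. Writing $A(x)$, $P(x)$, $E(x)$ for the counting functions up to $x$, this gives $P(x)\le A(x) - E(x)$. Hence
$$\overline d(\mathcal{P}) \le \limsup_{x\to\infty}\frac{A(x)}{x} - \liminf_{x\to\infty}\frac{E(x)}{x} = d(\mathcal{A}) - \underline d(\mathcal{E}) < d(\mathcal{A}).$$
The existence of $d(\mathcal{A}) = D\big(\frac12\big)$ comes from Davenport's theorem, as recalled in Section \ref{secBack}. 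Concretely, $\mathcal{A}$ is $\{n : n/\sigma(n)\le \frac12\}$, whose density exists by \eqref{defineDu}.

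\emph{Main obstacle.} The only point that needs care is this limsup/liminf manipulation, in particular the use of the existence of $d(\mathcal{A})$. The rest is a direct assembly of the earlier lemmas.
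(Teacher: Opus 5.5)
Your proposal is correct and follows essentially the same route as the paper. Like the paper, you take $\varepsilon=\frac{1}{78}$ in Lemma~\ref{lem:small-ratio}, set $\mathcal{E}=\{18m : m\in\mathcal{M}\}\subseteq\mathcal{A}\setminus\mathcal{P}$ via Lemma~\ref{lem:18m}, use the scaling $\underline d(\mathcal{E})=\frac{1}{18}\underline d(\mathcal{M})>0$, and use the disjointness of $\mathcal{P}$ and $\mathcal{E}$ inside $\mathcal{A}$ to get $\overline d(\mathcal{P})+\underline d(\mathcal{E})\le d(\mathcal{A})$; your explicit limsup/liminf step and the identification $d(\mathcal{A})=D\big(\frac12\big)$ just spell out what the paper uses implicitly.
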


\begin{proof}
 By setting $\varepsilon = \frac{1}{78}$ in Lemma \ref{lem:small-ratio}, 
 we find that there exists a set $\mathcal{M}$ of positive integers 
 such that 
\begin{equation}\label{calMlower} 
 \underline{d}(\mathcal{M}) > 0
\end{equation}
 and such that: For each member
 $m$ in $\mathcal{M}$, we have that $(m, 6) = 1$ and that $\frac{\sigma(m)}{m} < \frac{79}{78}$. 
 Again for a member $m$ of $\mathcal{M}$, we set $s(m) = \sigma(m) - m$. 
 We then find that $\frac{s(m)}{m} = \frac{\sigma(m)}{m} - 1 < \frac{1}{78}$. 
 So, if $m \in \mathcal{M}$, then $m$ satisfies all of the required conditions in Lemma \ref{lem:18m}. 
 So, if $m \in \mathcal{M}$, then $18 m$ is strictly abundant and $18 m \not\in \mathcal{P}$. 
 Recalling the definition of $\mathcal{A}$ in \eqref{definecalA}, we thus find that: 
 If $m \in \mathcal{M}$, then
 $18m \in \mathcal{A} \setminus \mathcal{P}$. 
 By then writing $\mathcal{E}$ in place of the family $\{ 18 m : m \in \mathcal{M} \}$, 
 we obtain the containment 
\begin{equation}\label{Econtaineddiff}
 \mathcal{E} \subseteq \mathcal{A} \setminus \mathcal{P}. 
\end{equation}
 Now, observe that 
\begin{equation}\label{xover18}
 \frac{ \left| \{ n \leq x : n \in \mathcal{E} \} \right| }{x} 
 = \frac{1}{18} \frac{ \left| \{ m \leq \frac{x}{18} : m \in \mathcal{M} \} \right| }{x/18}. 
\end{equation}
 Taking lower limits with $x \to \infty$, the relation in \eqref{xover18} then gives us that 
\begin{equation}\label{dEdM} 
 \underline{d}(\mathcal{E}) = \frac{1}{18} \underline{d}(\mathcal{M}). 
\end{equation}
 So, from \eqref{calMlower} and \eqref{dEdM} together, we find that 
\begin{equation}\label{lowerEpos} 
 \underline{d}(\mathcal{E}) > 0. 
\end{equation}

 Now, recall that every $\{ -1, 1\}$-perfect number is a nondeficient number, as observed by Ross \cite{Ross2024} (and recall that this 
 also follows from Lemma \ref{lem:criterion}). So, we find that the containment $\mathcal{P} \subseteq \mathcal{A}$ holds. From this 
 containment together with the containment in \eqref{Econtaineddiff}, we find that: For all $x \geq 1$, the families 
 $\{ n \leq x : n \in \mathcal{P} \}$ and $\{ n \leq x : n \in \mathcal{E} \}$
 are disjoint and are both contained in the family $\{ n \leq x : n \in \mathcal{A} \}$. 
 Consequently, we obtain the relation
\begin{equation}\label{threequotient} 
 \frac{\left| \{ n \leq x : n \in \mathcal{P} \} \right|}{x} 
 + \frac{\left| \{ n \leq x : n \in \mathcal{E} \} \right|}{x} 
 \leq \frac{\left| \{ n \leq x : n \in \mathcal{A} \} \right|}{x}. 
\end{equation}
 From \eqref{threequotient}, we find that 
\begin{equation}\label{ddd}
 \overline{d}(\mathcal{P}) + \underline{d}(\mathcal{E}) \leq d(\mathcal{A}). 
\end{equation}
 From \eqref{lowerEpos} and \eqref{ddd} together, as have, as a consequence, that
 $\overline{d}(\mathcal{P}) < d(\mathcal{A})$, as desired. 
\end{proof}

\section{Disproof of Ross's second conjecture}\label{seconddisproof}
 The following result gives an explicit family of counterexamples
 giving that Ross's second conjecture \cite[Conjecture 17]{Ross2024} does not hold, 
 by letting, for example $a = 945^2$. 

\begin{theorem}
 Let $a$ be an odd abundant square. If $p$ is a prime such that $p > \sigma(a)$, then $n = a p$
 is an odd nonsquare abundant number, but $n \not\in \mathcal{P}$. 
\end{theorem}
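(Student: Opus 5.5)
The plan is to verify the elementary properties of $n=ap$ directly and then rule out membership in $\mathcal{P}$ via Lemma \ref{lem:criterion}, using a base-$p$ (quotient/remainder) argument in the spirit of the proof of Lemma \ref{lem:18m}.

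\emph{Elementary properties.} Since $p>\sigma(a)\geq a$, we have $p\nmid a$, so $(a,p)=1$ and $p$ divides $n$ exactly once. Hence $n$ is odd and not a square. By multiplicativity, $\sigma(n)=\sigma(a)(p+1)$. Since $\sigma(a)>2a$, this gives $\sigma(n)>2ap=2n$, so $n$ is strictly abundant.

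\emph{Computing $H$.} Put $H=\frac{\sigma(n)-2n}{2}$. Because $a$ is an odd square, every factor $1+q+\cdots+q^{2b}$ in \eqref{basicsigma} is a sum of an odd number of odd terms. Thus $\sigma(a)$ is odd, and so $\sigma(a)-2a-1$ is even. This lets us write
\[
 H=\frac{(\sigma(a)-2a)p+\sigma(a)}{2}=Qp+r,\qquad Q=\frac{\sigma(a)-2a-1}{2},\quad r=\frac{p+\sigma(a)}{2}.
\]
Here $Q\geq 0$ is an integer, and $r$ is an integer with $0<r<p$, since $\sigma(a)<p$. So $Q$ and $r$ are exactly the quotient and remainder of $H$ upon division by $p$. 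The key inequality is $r>\sigma(a)$, which is equivalent to $p>\sigma(a)$; this is precisely where the hypothesis on $p$ enters.

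\emph{Subset sums.} Every divisor of $n$ has the form $d$ or $dp$ with $d\mid a$. A sum $\mathscr{S}$ of distinct divisors of $n$ other than $1$ and $n$ therefore has the form $\mathscr{S}=Ap+B$, where $A\geq 0$ is an integer and $B$ is a sum of distinct divisors of $a$ excluding $1$. In particular $0\leq B\leq \sigma(a)-1<p$, so $B$ is the remainder of $\mathscr{S}$ modulo $p$. If $\mathscr{S}=H$, uniqueness of division with remainder would force $B=r>\sigma(a)$, which contradicts $B\leq\sigma(a)-1$. Hence $H$ is not such a sum, and Lemma \ref{lem:criterion} gives $n\notin\mathcal{P}$.

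The only delicate points are the parity of $\sigma(a)$, which is needed so that $Q$ and $r$ are integers, and the bound $B<p$, which is what makes the base-$p$ comparison legitimate. Both follow directly from the hypotheses that $a$ is an odd square and $p>\sigma(a)$.
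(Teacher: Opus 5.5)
Your proposal is correct and follows essentially the same route as the paper: the same splitting $H = Qp + \frac{p+\sigma(a)}{2}$ and the same decomposition $\mathscr{S} = Ap + B$. Your appeal to uniqueness of division with remainder by $p$ just repackages the paper's two-case comparison ($A \le q$ versus $A \ge q+1$). The one point you leave implicit is that $p$ is odd, which follows from $p > \sigma(a) > 2a \ge 2$; you need it both for $n$ to be odd and for $r = \frac{p+\sigma(a)}{2}$ to be an integer.
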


\begin{proof}
 Let $a$ denote an odd abundant square. Let $p$ be a prime such that $p > \sigma(a)$. We then write $n$ in place of $ap$. Since 
 $\sigma(a) > 2a$ (from the assumption that $a$ is strictly abundant), we find that $p > \sigma(a) > 2a > a$, 
 with $p \nmid a$. Moreover, since $a \geq 1$, we have that $p > 2$, so that $n$ is odd. 
 Since $p \nmid a$, the exponent of $p$ in the prime factorization of $n = a p$ is $1$.
 So, since $a$ is a square, we see that $n$ is not a square. 

 Now, since $(p, a) = 1$ and since $\sigma$ is multiplicative, 
 we find that $\sigma(n) = \sigma(a) (p+1)$. 
 As a consequence, we have that 
\begin{equation}\label{sigman2n}
 \sigma(n) - 2 n = (\sigma(a) - 2 a) p + \sigma(a). 
\end{equation} 
 From \eqref{sigman2n} together with $a$ being strictly abundant together with the inequality
 $\sigma(a) > 0$, we find that $\sigma(n) - 2 n > 0$, i.e., so that $n$ is abundant. 

 Since $a$ is an odd square, we write 
\begin{equation}\label{FTAa}
 a = \prod_{i=1}^{\omega(a)} p_{a_i}^{2c_i}, 
\end{equation} 
 with $a_i > 1$ and $c_i \geq 1$ for all $i \in \{ 1, 2, \ldots, \omega(a) \}$. 
 From \eqref{basicsigma} and \eqref{FTAa} together, we find that 
\begin{equation}\label{sigmaaprod} 
 \sigma(a) = \prod_{i=1}^{\omega(a)} \big( 1 + p_{a_i} + p_{a_i}^{2} + \cdots + p_{a_{i}}^{2 c_{i}} \big), 
\end{equation}
 with each factor on the right of \eqref{sigmaaprod} being odd, i.e., so that $\sigma(a)$ is odd. 
 So, the integer $\Delta := \sigma(a) - 2 a$ is odd. 
 Also, with the assumption that $a$ is abundant, we find that $\Delta > 0$. 
 So, there exists an integer $q \geq 0$ satisfying $\Delta = 2 q + 1$. 
 As a consequence of \eqref{sigman2n}, we obtain that 
 $ \sigma(n) - 2 n = \Delta p + \sigma(a)$, and we define
 $ H(n) := \frac{\sigma(n) - 2 n}{2}. $ From the definition, we find that 
\begin{align*} 
 H(n) 
 & = \frac{\Delta p + \sigma(a)}{2} \\ 
 & = q p + \frac{p + \sigma(a)}{2}.
\end{align*}

 Now, by way of contradiction, suppose that $n \in \mathcal{P}$. So, by Lemma \ref{lem:criterion}, the integer $H(n)$ can be as a sum of 
 proper, nontrivial divisors of $n$. Now, let $\mathscr{S}$ denote a sum of nontrivial and proper divisors of $n$. Since $n = ap$ and since 
 $a$ and $p$ are relatively prime, each divisor of $n$ is either a divisor $d$ of $a$ or is of the form $dp$, again for a divisor $d$ of $a$. 
 This gives us that $\mathscr{S} = A p + B$ for some $A \geq 0$ and an integer $B$, 
 which satisfies the bounds whereby $0 \leq B \leq \sigma(a)$. 
 From the condition $\sigma(a) < p$, 
 we find that 
\begin{equation}\label{qpsigma} 
 q p + \sigma(a) < q p + \frac{p + \sigma(a)}{2} = H(n).
\end{equation}
 In a similar fashion, we have that 
\begin{equation}\label{qpfrac} 
 H(n) = q p + \frac{p + \sigma(a)}{2} < q p + p. 
\end{equation}
 If $A \leq q$, then $Ap + B \leq q p + \sigma(a) < H(n)$, from \eqref{qpsigma}. If $A \geq q + 1$, then $A p + B \geq (q+1) p > H(n)$, 
 from \eqref{qpfrac}. So, no divisor sum of the form $\mathscr{S}$ can be equal to $H(n)$, contradicting (via Lemma \ref{lem:criterion}) 
 our assumption that $n \in \mathcal{P}$. 

 So, we have shown that $n$ is odd, nonsquare, and abundant, with $n \not\in \mathcal{P}$. 
\end{proof}

\subsection*{Acknowledgements}
 The author thanks Joshua Zelinsky for some useful feedback, and the author acknowledges extensive interactions with GPT-5.5 
 Pro during the exploratory and proof-development stages of this work. All AI-generated suggestions were 
 substantially revised, corrected, and independently verified by the author, who assumes full responsibility for the mathematical content.

\end{document}